\theoremstyle{plain}
\newtheorem{theorem}{Theorem}
\newtheorem{lemma}[theorem]{Lemma}
\newtheorem{proposition}[theorem]{Proposition}
\theoremstyle{definition}
\newtheorem{definition}[theorem]{Definition}
\theoremstyle{remark}
\newtheorem*{remark}{Remark}
\newtheorem*{acknowledgments}{Acknowledgments}
\newcommand{\Z}{\mathbb Z}    
\newcommand{\R}{\mathbb R}    
\newcommand{\Aff}{\operatorname{Af{}f}}
\newcommand{\<}{\langle}   
\renewcommand{\>}{\rangle} 
\newcommand{\length}{\operatorname{length}}
\newcommand{\ignore}[1]{\relax}
\newcommand{\supp}{\operatorname{supp}}
\newcommand{\Div}{\operatorname{Div}}
\begin{document}

\title[$C \not \sim C^-$: a tropical point of view]
{$C$ is not equivalent to $C^-$ in its Jacobian: a tropical point of view}

\author{Ilia Zharkov}
\address{Kansas State University, 138 Cardwell Hall, Manhattan, KS 66506}
\email{zharkov@math.ksu.edu}

\begin{abstract}
We show that the Abel-Jacobi image of a tropical curve $C$ in its  Jacobian $J(C)$ is not algebraically equivalent to its reflection by using a simple calculation in tropical homology.
\end{abstract}
\maketitle

\footnote{The research is partially supported by the NSF FRG grant DMS-0854989.}

\section{Introduction}
A classical result of Ceresa \cite{ceresa} states that for a generic smooth projective complex curve $C$ of genus $g\ge 3$ the cycles $W_k$ and $W_k^-$, the image of $W_k$ under the involution $(-1): x\to -x$, are not algebraically equivalent in the Jacobian $J(C)$. In this article we will give an argument for analogous tropical statement.

There is a folklore dictionary between complex objects existing in families and their tropical counterparts  appearing as degenerations. For example, if a family of complex curves degenerates to a tropical curve then the corresponding family of Jacobians degenerates in certain sense to the Jacobian of the limiting tropical curve. Moreover, families of cycles in the Jacobians should become tropical cycles (balanced weighted polyhedral complexes with rational slopes) in the tropical Jacobian. Studying tropical cycles, which are essentially linear objects, is a much simpler task than studying the classical cycle problems.

Several correspondences between classical and tropical objects have been established (cf., e.g. \cite{alexeev}, or more recent \cite{viviani} and references therein). 
However the author is unaware of a general assertion that tropical algebraic inequivalence of cycles implies classical inequivalence for generic members of the family. Nor is this the goal of this paper.

Rather our main objective is to introduce a new tropical tool -- the determinantal form. In the case of the Jacobian of a genus 3 curve this is a constant 2-form with values in the integral elements of the cotangent space. If a tropical chain provides an algebraic equivalence between two 1-cycles then the determinantal form by construction vanishes on every 2-cell in the support of this chain. Analogs of the determinantal form exist in higher dimensions and may be used to algebraically distinguish cycles in more complicated tropical varieties. This will potentially give a new (tropical) approach for proving algebraic inequivalences in the classical setting, once the dictionary is established.

\begin{acknowledgments}
The author is very greatful to the referee for pointing out several inaccuracies in the original version of the manuscript.
\end{acknowledgments}

\section{Tropical curves and Jacobians}
\subsection{Tropical curves}
Let $\Gamma$ be a connected finite graph and ${\mathcal V}_1(\Gamma)$ be the set of its 1-valent vertices. We say $\Gamma$ is a metric graph if the topological space $\Gamma\setminus{\mathcal V_1}(\Gamma)$ is given a complete metric structure and $\Gamma$ is its compactification. In particular, all leaves must have infinite lengths. A new two-valent vertex inserted in the interior of any edge produces another metric graph which we set to be {\em equivalent} to the original one. 

\begin{definition}
A {\em tropical curve $C$} is an equivalence class of such metric graphs. Its genus is $g=b_1(\Gamma)$ for any representative $\Gamma$.
\end{definition}

For purposes needed in this paper it is enough to consider finite graphs. That is we can always remove all (infinite) leaves. That affects neither the construction of the Jacobian $J(C)$ nor the Abel-Jacobi map and its image $W_1\subset J(C)$ of $C$, which are the main objects of our interest.

A {\em divisor} $D=\sum a_i p_i$ on the curve $C$ is a formal linear combination of points in $C$ with integral coefficients. We say $D$ has degree $d=\sum a_i$, and $D$ is {\em effective} if all $a_i\ge 0$. Divisors of degree zero form an abelian group.

\subsection{Tropical tori}
Let $V$ be a $g$-dimensional real vector space containing two lattices $\Gamma_1,\Gamma_2$ of maximal rank, that is $V \cong \Gamma_{i}\otimes \R$ for $i=1,2$. Suppose we are given an isomorphism $Q:\Gamma_1\to \Gamma_2^*$, which is symmetric if thought of as a bilinear form on $V$. 

\begin{definition}
The torus $J=V/\Gamma_1$ is the {\em principally polarized tropical torus} with $Q$ being its polarization. The tropical structure on $X$ is given by the lattice $\Gamma_2$. If, in addition $Q$ is positive definite, we say that $J$ is {\em abelian variety}.
\end{definition}

\begin{remark}
The map $Q:\Gamma_1\to \Gamma_2^*$ provides an isomorphism of $J=V/\Gamma_1$ with the tropical torus $V^*/\Gamma_2^*$. The tropical structure on the latter is provided by the lattice $\Gamma_1^*$. In the case of the Jacobian it is very difficult to distinguish between $V/\Gamma_1$ and $V^*/\Gamma_2^*$.
\end{remark}

The data $(V, \Gamma_1, \Gamma_2, Q)$ above is equivalent to a non-degenerate real-valued quadratic form $Q$ on a free abelian group $\Gamma_1\cong \Z^g$. The second lattice $\Gamma_2 \subset V := \Gamma_1\otimes \R$ is then identified with the lattice dual to the image of $\Gamma_1$ under the isomorphism $Q: V\to (V)^*$.

\subsection{Jacobian of a curve}
Let $C$ be a tropical curve of genus $g$. Let $\Gamma_1=H_1(C,\Z) \cong \Z^g$.  We define the symmetric  bilinear form $Q$ on $\Gamma_1$ as $Q(\gamma, \gamma) = \length (\gamma)$ on simple cycles and extend $Q$ bilinearly to arbitrary pairs of cycles. We call the resulting tropical torus $J(C)$ the {\em Jacobian} of the curve $C$.

There is a natural way to visualize the vector space $V$ and the second lattice $\Gamma_2\subset V$ in geometric terms as follows. Let $\Aff$ be the sheaf of $\Z$-affine functions (in some coordinate charts on $C$). Define the integral cotangent local system
${\mathcal T_\Z}^*$ on $C$ by the following exact sequence of sheaves:
\begin{equation*}
0 \longrightarrow \R  \longrightarrow \Aff \longrightarrow {\mathcal T_\Z}^*\longrightarrow
0.
\end{equation*}
The rank $g$ free abelian group of {\em 1-forms} $\Gamma_2^*=\Omega_\Z(C)$ on $C$ is formed by the global sections of ${\mathcal T_\Z}^*$ . Each such form can be thought of as an integral circuit on $C$ satisfying Kirchhofs's law. Then $V=\Omega(C)^*$ is the vector space of $\R$-valued linear functionals on $\Omega_\Z(C)$. The integral cycles in $H_1(C,\Z)$ (i.e. the elements of $\Gamma_1$) become linear functionals on $\Omega(C)$ by integration. The isomorphism $Q:\Gamma_1 \cong \Gamma_2^*$ is the tautological identification between integral cycles and integral circuits on $C$.

Then the definition of the tropical Jacobian
$$J(C):= V/\Gamma_1 = \Omega(C)^*/H_1(C,\Z)$$
is analogous to the classical one for  Riemann surfaces.

\subsection{The Abel-Jacobi map}
Let us fix a reference point $p_0\in C$. Then we can identify the set $\Div^d(C)$ of degree $d$ divisors with $\Div^0(C)$, thus giving a group structure on $\Div^d(C)$ for any $d$.
Given a divisor
$D=\sum a_i p_i$ we choose paths from $p_0$ to $p_i$. Integration along these
paths defines a linear functional on $\Omega_\Z(C)$:
$$\hat\mu(D)(\omega)=\sum a_i \int_{p_0}^{p_i} \omega. $$
For another choice of paths the value of $\hat\mu(D)$ on $\omega$ will differ from the above by $\int_\gamma \omega$ for some $\gamma\in\Gamma_1$. Thus, we get a well-defined {\em
Abel-Jacobi map} $\mu^d:\Div^d(C)\to J(C)$.

We restrict our attention to the effective divisors of degree $d$. As was noted in \cite{MZ2} the Abel-Jacobi map $\mu^1: C\to J(C)$ is a tropical morphism, and so are the maps from the curve's $k$-th powers $\mu^k: C^k\to J(C)$. Let $W_{k} \subset J(C)$ denote its image. 

\section{Algebraic cycles in $J(C)$}
Algebraic $k$-cycles in $J(C)$ are weighed balanced $k$-dimensional polyhedral complexes with $\Gamma_2$-rational slopes. The $W_k$ provide examples of such cycles since they are images of tropical varieties (the powers of $C$) under tropical morphisms (cf. \cite{Mik06}).
 
 \begin{figure}[htb]
    \centering
    \includegraphics[width=4in]{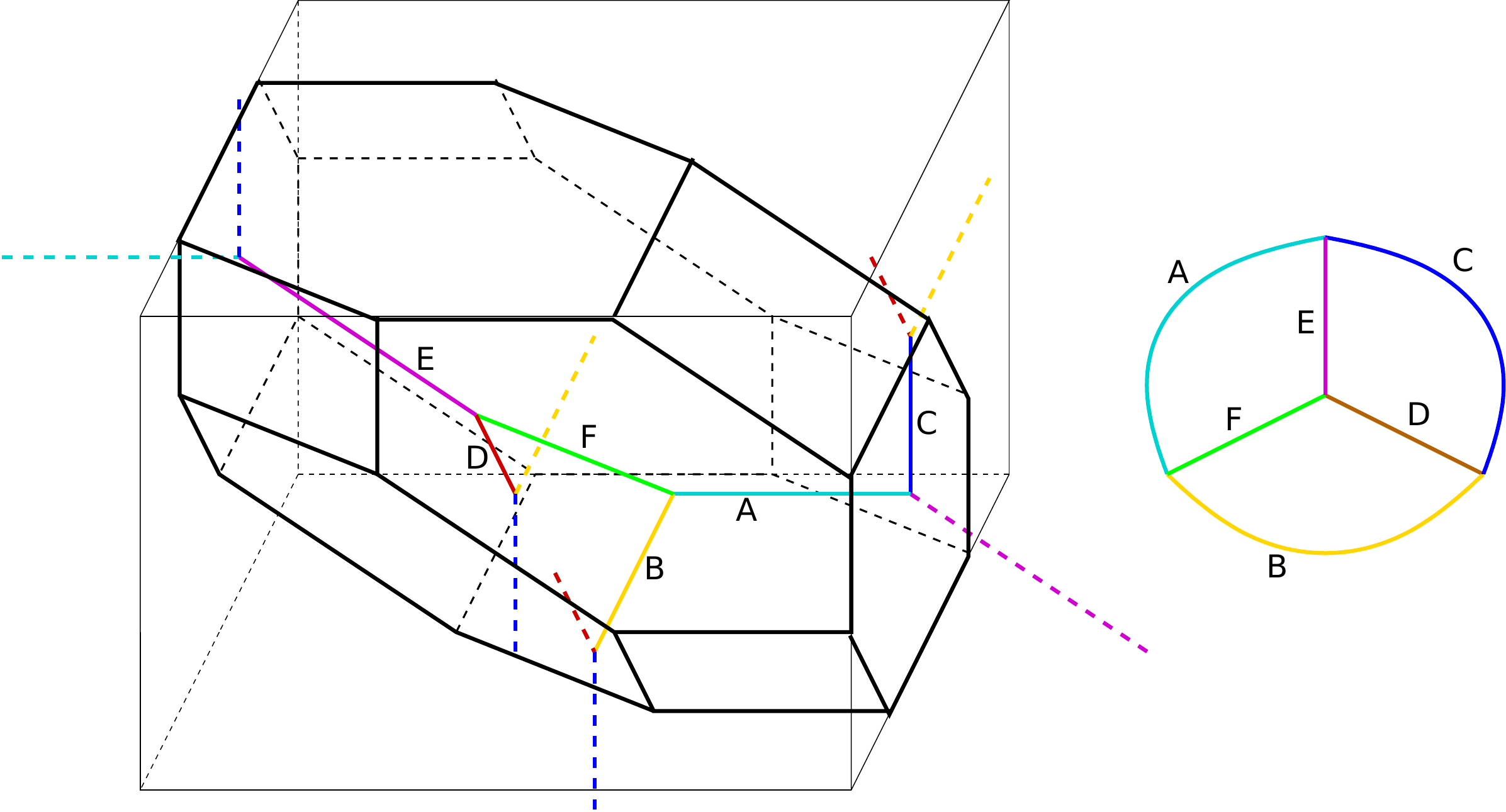} 
    \caption{A genus 3 curve and its image $W_1$ in the $J(C)$.}
     \label{fig:W1}
  \end{figure}
Fig. \ref{fig:W1} shows $W_1$ as a $\Gamma_1$-periodic 1-cycle in $V\cong \R^3$, the universal covering of $J(C)$. The space $V$ is filled by the maximal Voronoi cells, each is the zonotope, more precicsely,  the Minkowski sum of intervals corresponding to the edges of $C$. 
(cf.  Section \ref{section:zonotopes}). We will use same zonotope as a fundamental domain for future pictures as well.

We say two cycles $Z_1$ and $Z_2$ in $J$ are {\em algebraically equivalent}: $Z_1\sim Z_2$, if there is a tropical curve $S$, two points $s_1, s_2\in S$ and an algebraic cycle $W\subset J\times S$ (cf. \cite{Mik06}) such that
$$\pi_*[W\cap (J \times s_1) - W\cap (J \times s_2) ] = Z_1-Z_2 .
$$
Here $\cap$ means tropical (or stable) intersection (cf. \cite{Mik06}, \cite{Sturm}, \cite{Shaw}) and $\pi: J\times S \to J$ is the projection. 

Recall that the choice of the base point $p_0$ makes $J(C)$ into an abelian group. Notice, however, that another choice of $p_0$ will result in translation of the cycle $W_k$, which makes the choice of the base point irrelevant modulo algebraic equivalence. Given this group structure one can define an action by integers $n_*: J\to J$ by $n_*:  x \mapsto n x$, which descends to an action on the Chow group. The main purpose of this article is to compare $W_k$ with $W_k^-:= (-1)_* W_k$ modulo algebraic equivalence.

First we restrict our attention to the genus 3 connected curves. There are five combinatorial types of generic (trivalent) curves: $K_4$ and four hyperelliptic types (see Fig. \ref{fig:five_types}). 
 \begin{figure}[htb] 
     \centering
    \includegraphics[width=5.2in]{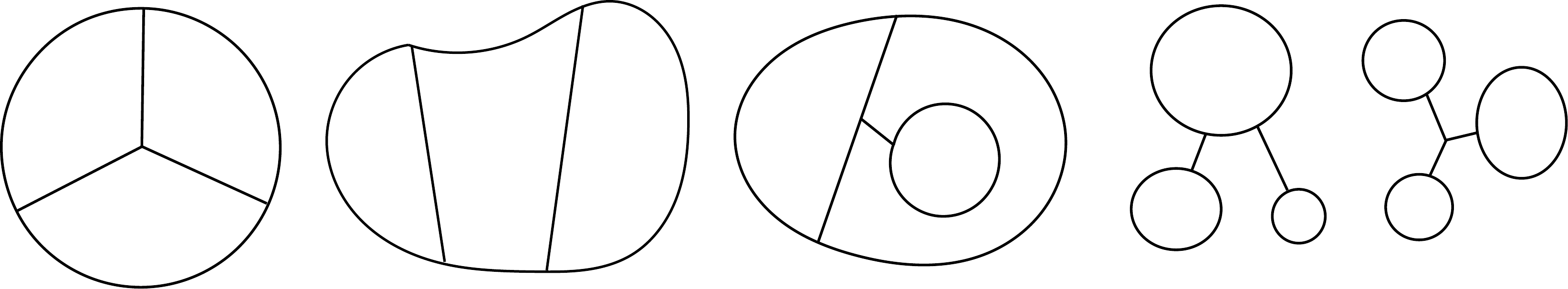}
         \caption{The five types of generic genus 3 curves.}
         \label{fig:five_types}
  \end{figure}
Other genus 3 curves are degenerations of these five.

\begin{theorem}\label{thm:main}
Let $C$ be a generic genus 3 curve of type $K_4$. Then  $W_1$ is not algebraically equivalent to $W_1^-:= (-1)_* W_1$ in $J(C)$.
\end{theorem}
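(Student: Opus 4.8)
The plan is to argue by contradiction, using the determinantal form announced in the introduction as a secondary invariant that is blind to algebraic equivalence yet separates $W_1$ from $W_1^-$. Suppose $W_1 \sim W_1^-$. Then in particular $W_1$ and $W_1^-$ are homologous, so $[W_1] = [W_1^-]$ in $H_1(J(C),\Z) = \Gamma_1$; since $(-1)_*$ acts by $-1$ on $\Gamma_1$ this gives $[W_1] = -[W_1]$, hence $[W_1]=0$ and the difference $Z := W_1 - W_1^-$ is null-homologous. (If $W_1$ and $W_1^-$ were not even homologous, there would be nothing to prove.) The hypothesized tropical family $W \subset J(C)\times S$ then provides, after projecting, a weighted balanced rational $2$-chain $\Sigma$ supported on $\pi(W)$ with $\partial\Sigma = Z$.

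Next I would construct the determinantal form $\eta$ on $J(C)=V/\Gamma_1$ as a constant $2$-form with values in the integral cotangent space $\Gamma_2^*$, built from the polarization $Q$ together with the volume form on $V$, so that on integral tangent vectors $a,b\in\Gamma_2$ the value $\eta(a,b)\in\Gamma_2^*$ is computed by $3\times 3$ determinants (whence the name). Pairing $\eta$ against $2$-chains yields a secondary invariant: for a null-homologous $1$-cycle $Z=\partial\Sigma$ set $\nu(Z):=\int_\Sigma\eta\in V^*$, well defined modulo the period lattice $\{\int_\sigma\eta : \sigma\in H_2(J(C),\Z)\}$. I would record the symmetry $(-1)^*\eta=-\eta$ (the form part is even and the coefficient part odd under $x\mapsto -x$), so that $\nu(W_1^-)\equiv -\nu(W_1)$ and hence $\nu(Z)\equiv 2\,\nu(W_1)$ modulo periods; the theorem then reduces to showing $2\,\nu(W_1)\not\equiv 0$.

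The \emph{key} property is that $\eta$ vanishes on every $2$-cell in the support of an algebraic equivalence, so that $\nu$ is unchanged under $\sim$ and would force $\nu(Z)\equiv 0$. This is where the fibration of $W$ over the parameter curve $S$ enters: each sweeping cell of $\pi(W)$ has a tangent plane spanned by a tangent direction of the moving $1$-cycle and by the velocity of its motion, and because every slice $W\cap(J(C)\times s)$ stays in the fixed homology class of $W_1$, this velocity is constrained; $\eta$ is designed precisely so that the determinant it computes on such a constrained plane vanishes (the tropical shadow of the fact that a holomorphic form annihilates variations of a cycle within a fixed homology class). Making this vanishing rigorous for weighted balanced rational $2$-chains — accounting for the cell weights and the balancing condition along the codimension-one strata — is what I expect to be the main obstacle.

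Finally I would carry out the explicit computation for a generic $K_4$ curve. Using the zonotopal Voronoi decomposition of $V$ and an explicit $2$-chain $\Sigma_0$ bounding $Z$ (for instance the prism swept out by a piecewise-linear homotopy from $W_1$ to $W_1^-$ inside a fundamental domain), I would evaluate $\nu(Z)=\int_{\Sigma_0}\eta$ together with the period lattice of $\eta$, and show that $2\,\nu(W_1)$ is nonzero modulo periods for generic edge lengths. The genericity, and the restriction to type $K_4$ rather than the hyperelliptic types, should surface here as the nondegeneracy of the relevant determinant; a hyperelliptic or otherwise degenerate configuration is exactly where this invariant would collapse. The resulting contradiction with $\nu(Z)\equiv 0$ completes the proof.
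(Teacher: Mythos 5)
Your overall strategy coincides with the paper's (a constant $\Gamma_2^*$-valued determinantal $2$-form, its vanishing on chains coming from an algebraic equivalence, well-definedness of the integral modulo periods, and an explicit computation for $K_4$), but two of your steps are genuinely broken as written. First, the homological bookkeeping at the start is wrong. $W_1$ is not an ordinary $\Z$-cycle at all: it is a trivalent graph, so no integer weighting of its oriented edges has vanishing boundary; it is a cycle only in tropical homology, with coefficients in $\Gamma_2$ given by the tautological framing. On $H_1(J,\Gamma_2)\cong\Gamma_1\otimes\Gamma_2$ the involution $(-1)_*$ acts by $+1$, not $-1$, because it flips both the underlying cells and their framing vectors. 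Hence $[W_1]=[W_1^-]$ with both classes \emph{nonzero} --- the exact tropical analogue of $[C]=[C^-]$ in $H_2(J(C),\Z)$ classically, which is what makes Ceresa-type statements nontrivial. Your deduction $[W_1]=-[W_1]=0$ is false; consequently $W_1$ does not bound, $\nu(W_1)$ is undefined, and the reduction $\nu(Z)\equiv 2\,\nu(W_1)$ has no meaning. (This detour is also unnecessary: one computes the integral directly over one explicit chain bounding $[W_1]-[W_1^-]$.)

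Second, and more seriously, your invariant $\nu(Z)=\int_\Sigma\eta\in V^*$ is built on \emph{unframed} $2$-chains, and in that setting the key vanishing property fails: on a nondegenerate $2$-cell with spanning vectors $v_1,v_2$ the covector $\eta(v_1,v_2)=\det(v_1,v_2,\cdot\,)$ is simply nonzero, and no constraint on the homology classes of the slices $W\cap(J\times s)$ can kill it. What makes the vanishing true in the paper is that the connecting chain carries coefficients in $\Gamma_2$: each $2$-cell $\tau$ of $\gamma_0\in C_2(J,\Gamma_2)$ inherits a framing $\beta_\tau$ pulled back from the tautological framing of the slices $Z_s$ (the paper's Lemma \ref{lemma:boundary}), this $\beta_\tau$ lies in the linear span of $\tau$, and the \emph{scalar} $\langle \Omega_0(v_1,v_2),\beta_\tau\rangle=\det(v_1,v_2,\beta_\tau)$ vanishes because the three vectors are coplanar. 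Your sentence about the ``tangent plane spanned by a tangent direction of the moving $1$-cycle and by the velocity of its motion'' contains exactly this idea, but the object you integrate against forgets the framing and with it the whole mechanism; correspondingly, the periods must be taken over $H_2(J,\Gamma_2)=\bigwedge^2\Gamma_1\otimes\Gamma_2$, where they are numbers (the minors of the polarization matrix $Q$), not over $H_2(J,\Z)$. You correctly flag making the vanishing rigorous as ``the main obstacle,'' but that is precisely where the content lies (the paper's Lemmas \ref{lemma:boundary} and the vanishing lemma); once it is in place, the paper's explicit five-parallelogram chain gives the value $-ad$, which for generic edge lengths is not in the $\Z$-span of the minors of $Q$, finishing the proof.
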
  

\begin{remark}
For curves of the hyperelliptic types we do have $W_1\sim W_1^-$. This reflects the failure of Torelli theorem for tropical curves. More precisely for a curve $C$ of a hyperelliptic type there is a hyperelliptic curve (that is the one which admits a degree 2 map to a rational curve) whose Jacobian coincides with $J(C)$. Moreover the $W_1$ for $C$ is a deformation of the Abel-Jacobi image of this hyperelliptic curve.
\end{remark}

The proof of the theorem mimics Ceresa's original proof \cite{ceresa} in the complex case with some simplifications. The connecting 3-chains are replaced by tropical chains with coefficients in $\Gamma_2$ and tropical (co)homology (cf. \cite{MZh13}) plays the r\^ole of the Hodge decomposition in $H^3(C)$. But we do not need to consider the full 6-dimensional family or use Griffiths transversality. Our argument works for any (generic) fixed tropical curve.

\subsection{Algebraic cycles and tropical homology}
Let $C$ be a curve of type $K_4$ and let $J$ be its Jacobian. Let $C_k(J, \Gamma_2)$ denote the group of polyhedral k-chains with coefficients in $\Gamma_2\cong \Z^3$. That is, an element $\gamma\in C_k(J, \Gamma_2)$ has the form $\sum _\sigma \beta_\sigma \sigma$, where $\beta_\sigma\in \Gamma_2$ (we call $\beta_\sigma$ the framing vector at $\sigma$) and $\sigma:\Delta \to J$ is a linear map from a polytope $\Delta \subset \R^k$ to $J$. Sometimes we will abuse notation and identify $\sigma$ with its image in $J$. The usual boundary map $\partial$ makes $C_\bullet (J, \Gamma_2)$ into a chain complex. The universal coefficient theorem allows us canonically identify homology $H_k(J,\Gamma_2)$ of this complex with the groups $\bigwedge^k \Gamma_1 \otimes \Gamma_2$.

Given an algebraic 1-cycle $Y$ in $J$ one can associate to it a {\em tautological} tropical cycle $[Y]\in C_1(J, \Gamma_2)$ as follows (cf. \cite{MZh13} for details). Every (oriented) edge $e\subset Y$ with weight $w_e$ defines a 1-cell $w_e \sigma_e$, and the primitive vector along this edge defines its framing $\beta_e$, an element in $\Gamma_2$. If the orientation of the edge is reversed, then so is the direction of the primitive vector. Both effects cancel in $ C_1(J, \Gamma_2)$.

Let $Z_1$ and $Z_2$ be two 1-cycles which are algebraically equivalent. Then their classes $[Z_1]=[Z_2]$ in $H_1(J,\Gamma_2)$, since one can view the algebraic equivalence as a deformation family connecting $Z_1$ and $Z_2$ (cf. \cite{Mik06}). Our next goal is to describe a particular element $\gamma\in  C_2(J, \Gamma_2)$ with $\partial \gamma = [Z_1] - [Z_2]$ on the chain level.

Suppose $W$, a cycle in $J\times S$, provides an algebraic equivalence between $Z_1$ and $Z_2$. Choose a path $P$ in $S$ between $s_1$ and $s_2$ and let $Z_s:=\pi_* (W\cap (J\times s))$ for any $s\in P$. Let $\pi':J\times S \to S$ denote the other projection and let $W_P$ be the restriction of $W$ to $(\pi')^{-1} (P)$. $W_P$ is a weighted polyhedral complex in $J\times S$ with weights inherited from $W$. 

Let $\tau$ be a 2-cell in the support of $W_P$ with weight $w_\tau$. If $\pi': J\times S \to S$ is transversal on $\tau$ we 
can define a vector field on $\tau$ by pulling back the tautological framing from the cycles $Z_s$, for $s\in \pi'(\tau)\subset S$. Since the maps $\pi$ and $\pi'$ are linear on $\tau$ this vector field is constant on $\tau$, and thus defines the coefficient $\beta_\tau$. If $\pi'$ is not transversal on $\tau$, that is it maps $\tau$ to a point in $S$, we set $\beta_\tau=0$.

\begin{lemma}\label{lemma:boundary}
Let $W$ be an algebraic equivalence between $Z_1$ and $Z_2$. Then the tropical 2-chain with coefficients defined above
$$\gamma= \sum_{\tau\in \supp (W_P)} w_\tau \beta_\tau \pi(\tau)
$$
connects the corresponding tropical cycles:  $\partial \gamma = [Z_1] - [Z_2]$.
\end{lemma}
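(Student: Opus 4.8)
The plan is to compute $\partial\gamma$ directly, cell by cell, and to match it against the tautological cycles $[Z_1]$ and $[Z_2]$. Since each framing $\beta_\tau\in\Gamma_2$ is a fixed vector, the boundary operator acts only on the geometric cells: $\partial\gamma=\sum_\tau w_\tau\,\beta_\tau\,\partial(\pi(\tau))$. For each $2$-cell $\tau$ on which $\pi'$ is transversal, the edges of $\partial\tau$ fall into two types. The \emph{fiberwise} edges lie in a single fiber $J\times s$ and project under $\pi$ to edges of the slice cycle $Z_s$; by construction $\beta_\tau$ is exactly the primitive direction of such an edge, i.e.\ the tautological framing of that edge of $Z_s$. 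The \emph{transverse} edges are swept out by the vertices of $Z_s$ as $s$ moves along $\pi'(\tau)$, and along these $\beta_\tau$ points in the transverse slice direction. The non-transversal cells, being collapsed by $\pi'$ to a point, carry $\beta_\tau=0$ and drop out of $\gamma$ altogether. Thus $\partial\gamma$ is a $\Gamma_2$-framed $1$-chain supported on fiberwise edges (over various $s$) and on transverse side edges.

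First I would isolate the contributions over the endpoints. Over $s_1$ and $s_2$ the fiberwise edges of the cells meeting these fibers reassemble, with their weights $w_\tau$ and tautological framings $\beta_\tau$, into the tautological cycles of $Z_{s_1}=Z_1$ and $Z_{s_2}=Z_2$; here I use that $Z_{s_i}=\pi_*(W\cap(J\times s_i))$, so that the edges, weights and primitive directions coincide with those recorded by $[Z_i]$. With the orientation induced by traversing $P$ from $s_1$ to $s_2$, these terms contribute $[Z_1]-[Z_2]$ to $\partial\gamma$.

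It then remains to show that every other coefficient of $\partial\gamma$ vanishes. For a transverse side edge $\rho$, which is traced by a moving vertex $v$ of $Z_s$, the $2$-cells abutting $\rho$ are in bijection with the edges of $Z_s$ incident to $v$, and each contributes, up to a sign fixed by orientation, its framing $\beta_\tau$ (the primitive direction of the corresponding edge) weighted by $w_\tau$. The resulting coefficient of $\rho$ is therefore, up to sign, the balancing sum $\sum_i w_i\beta_i$ of the $1$-cycle $Z_s$ at $v$, which is zero because $Z_s$ is balanced. For a fiberwise edge $\rho\subset J\times s_0$ over an interior point $s_0$, the cells contributing to its coefficient are those having $s_0$ as an endpoint of $\pi'(\tau)$, approaching the fiber from the two sides; I would show these cancel by invoking the balancing of the $2$-cycle $W_P$ along the ridge $\rho$ together with the continuity of $Z_s=\pi_*(W\cap(J\times s))$ across the fiber.

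The main obstacle is this last, interior, cancellation. While the side-edge cancellation is a clean consequence of the balancing of the individual slices $Z_s$, the fiberwise cancellation at a vertex $s_0$ of $P$ genuinely requires the balancing of the two-dimensional cycle $W_P$ (inherited from $W$) along its ridges, together with careful bookkeeping of the non-transversal cells: these carry zero framing and so are invisible in $\gamma$, yet they participate in the balancing of $W_P$ inside the fiber and must be shown not to obstruct the matching of the left- and right-hand limits of the framed slice $[Z_s]$. Establishing that the $\Gamma_2$-framing is compatible with this balancing --- so that the contributions of the transversal cells on the two sides of $J\times s_0$ agree --- is where the real content of the lemma lies; once it is in place, only the endpoint terms survive and $\partial\gamma=[Z_1]-[Z_2]$.
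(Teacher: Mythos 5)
Your overall strategy --- computing $\partial\gamma$ cell by cell and sorting boundary edges into endpoint, transverse, and interior-fiberwise contributions --- is the same direct computation the paper performs, and your treatment of transverse edges (balancing of the slice $Z_s$ at the moving vertex) and of the endpoint fibers matches the paper's argument in spirit. But the proposal stops exactly where the proof still has to be finished: you yourself declare the cancellation along fiberwise edges over interior points $s_0\in P$ to be ``where the real content of the lemma lies'' and only promise to handle it by invoking balancing of $W_P$ plus continuity, without carrying that out. The missing idea is the paper's opening reduction: subdivide $P$ at the finitely many parameters over whose fibers $W_P$ has non-transversal cells or fiberwise edges, and use additivity of the chain together with transitivity of algebraic equivalence to treat each subinterval separately. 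After this subdivision every problematic fiber is an \emph{endpoint} fiber of some subinterval, so the two-sided matching you worry about becomes the telescoping identity $\sum_j\bigl([Z_{s_j}]-[Z_{s_{j+1}}]\bigr)=[Z_1]-[Z_2]$, with each $[Z_{s_j}]$ recovered as a one-sided limit via continuity of the stable intersection. In particular your concern that non-transversal cells (which carry $\beta_\tau=0$) ``participate in the balancing of $W_P$ inside the fiber'' evaporates: such cells never lie over the interior of a subinterval, and at the subdivision points one only compares limits of slices, not boundary terms inside the fiber.

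A second point you gloss over, and which the paper addresses explicitly: at a transverse edge $\rho=\pi(e)$ the boundary coefficient of $\gamma$ is $\sum_i\pm w_{\tau_i}\beta_{\tau_i}$, where the $w_{\tau_i}$ are the weights of the $2$-cells of $W_P$, whereas the balancing relation of $Z_s$ at $v$ involves the weights of the edges of $Z_s$, i.e.\ the \emph{stable intersection} multiplicities; these differ from the $w_{\tau_i}$ by lattice indices measuring how each $\tau_i$ sits over $S$. Identifying the two sums, as you do, is therefore not automatic; this is precisely why the paper inserts the claim that the stable-intersection weights on the $2$-cells adjacent to $e$ are determined by the position of $e$ relative to $J\times s$ and hence are all equal, so that the two sums are proportional and the vanishing of one yields the vanishing of the other. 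Without such a statement (or a definition of $\beta_\tau$ that absorbs these indices), your balancing argument at transverse edges --- the part you treat as clean --- has a hole of its own.
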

\begin{proof}
By subdividing $S$ if necessary and using additivity of cycles and transitivity of algebraic equivalence it is enough to consider the case when $P$ is an edge and the map $\pi'$ is transversal on every $\tau\subset W_P$ in the preimage of the interior of $P$. Then the tropical (stable) intersection $W_P\cap (J\times s)$ used to define $Z_s$ becomes the usual set-theoretic intersection with some weights. 

Let $e$ be an interior edge of $W_P$. If $\pi'$ is transversal on $e$ then the weights from the stable intersection on the 2-cells adjacent to $e$ are determined by the relative position of $e$ with respect to $J\times s$ in $J \times S$ and hence all are the same. Then the balancing condition for the cycle $W$ along $e$ turns into the zero-boundary property of the chain $\gamma$. On the other hand if $\pi'(e)$ is a point in $S$ then we subdivide $P$ and use additivity as above.

Passing to the limit at the end points of $P$ and using continuity of the stable intersection we can identify  the boundary of $\gamma$ with the tautological cycle $[Z_1] - [Z_2]$. 
\end{proof}

\subsection{Determinantal 2-form and its periods.}
Let us fix a basis $e_1, e_2, e_3$ of $\Gamma_2$ and let $e^*_1, e^*_2, e^*_3$ be the dual basis of the dual lattice $\Gamma_2^*$. The choice of the basis defines canonically (up to translation) the linear coordinates $x_1, x_2, x_3$ on $J$. Let $dx_1, dx_2, dx_3$ be the corresponding constant 1-forms on $J$.

We will use notation $d \hat x_{i}=dx_{i+1}\wedge dx_{i+2}$, where we assume cyclic ordering of $\{1,2,3\}$. Then we define the constant 2-form with coefficients in $\Gamma_2^*$ as
$$\Omega_0:= \sum_{i=1}^3 e_i^* d \hat x_{ i}.
$$
In fact, the form is independent of the choice of the basis $\{e_i\}$ since with another choice it will change by the determinant of the basis transformation, which is 1.

We can integrate a $\Gamma_2^*$-valued 2-form along any tropical 2-chain in $\gamma\in  C_2(J, \Gamma_2)$ by first evaluating the coefficients point-wise and then taking the ordinary integral.

\begin{lemma}
Let $Z_1$ and $Z_2$ be two algebraically equivalent 1-cycles in $J$ and let $\gamma_0$ be a connecting chain between $[Z_1]$ and $[Z_2]$ as in Lemma \ref{lemma:boundary} above. Then $\int_{\gamma_0} \Omega_0= 0$.
\end{lemma}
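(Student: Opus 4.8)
The plan is to show that the integrand of $\int_{\gamma_0}\Omega_0$ vanishes pointwise on every $2$-cell of the connecting chain, so that the whole integral is zero term by term. Writing $\gamma_0 = \sum_\tau w_\tau\, \beta_\tau\, \pi(\tau)$ as produced by Lemma \ref{lemma:boundary}, the definition of integration against a $\Gamma_2^*$-valued form gives
\[
\int_{\gamma_0}\Omega_0 = \sum_\tau w_\tau \int_{\pi(\tau)} \langle \Omega_0, \beta_\tau\rangle,
\]
where $\langle \Omega_0, \beta_\tau\rangle = \sum_{i=1}^3 \langle e_i^*, \beta_\tau\rangle\, d\hat x_i$ is the ordinary scalar $2$-form obtained by evaluating the $\Gamma_2^*$-coefficients of $\Omega_0$ against the framing vector $\beta_\tau \in \Gamma_2$. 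Thus it suffices to prove that each summand $\int_{\pi(\tau)} \langle \Omega_0, \beta_\tau\rangle$ is zero.

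The first step is the geometric observation that the framing direction is tangent to the cell it frames. Recall from Lemma \ref{lemma:boundary} that $\beta_\tau$ is the tautological framing pulled back from the fiber cycles $Z_s$; that is, it is the primitive integer vector along the edge of $Z_s$ that sweeps out $\tau$ as $s$ ranges over $\pi'(\tau)\subset P$. Since $\pi$ restricts to the identity on each fiber $J\times\{s\}$, the image edge in $J$ points in the direction $\beta_\tau$, and the tangent plane of the swept cell $\pi(\tau)$ is spanned by $\beta_\tau$ together with a velocity vector $v_\tau$ describing the motion of this edge in $J$ as $s$ varies. (If $\pi(\tau)$ is lower-dimensional, or if $\pi'$ collapses $\tau$ so that $\beta_\tau = 0$, the corresponding integral is trivially zero.)

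It remains to evaluate the $2$-form $\langle \Omega_0, \beta_\tau\rangle$ on the pair $(\beta_\tau, v_\tau)$. Writing $\beta_\tau = \sum_j b_j e_j$ and $v_\tau = \sum_j v_j e_j$ in coordinates, and using $\langle e_i^*, \beta_\tau\rangle = b_i$ together with $d\hat x_i = dx_{i+1}\wedge dx_{i+2}$, one finds
\[
\langle \Omega_0, \beta_\tau\rangle(\beta_\tau, v_\tau)
= \sum_{i=1}^3 b_i\,(b_{i+1}v_{i+2} - b_{i+2}v_{i+1})
= \det \begin{pmatrix} b_1 & b_2 & b_3 \\ b_1 & b_2 & b_3 \\ v_1 & v_2 & v_3 \end{pmatrix} = 0,
\]
the determinant vanishing because it has two equal rows. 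Hence the integrand is identically zero on each $\pi(\tau)$, and summing over $\tau$ gives $\int_{\gamma_0}\Omega_0 = 0$. This is precisely the ``vanishing by construction'' of the determinantal form advertised in the introduction.

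I expect the only real content to lie in the first geometric step: correctly matching the framing vector $\beta_\tau$ with a tangent direction of $\pi(\tau)$, and keeping track of the degenerate cases where $\pi'$ fails to be transversal. Once this identification is secured, the rest is the repeated-row determinant identity, which is the whole point of the cyclic definition $\Omega_0 = \sum_i e_i^*\, d\hat x_i$: it forces the form to annihilate any plane containing its own framing direction, and every $2$-cell of a connecting chain is, by construction, such a plane.
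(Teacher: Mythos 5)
Your proof is correct and follows essentially the same route as the paper: the paper likewise observes that evaluating $\Omega_0$ on $v_1\wedge v_2\otimes\beta$ computes the volume of the parallelepiped spanned by the triple, and that the framing vector $\beta_\tau$ lies by construction in the linear span of its cell, forcing this volume (your repeated-row determinant) to vanish pointwise on every cell of $\gamma_0$. Your explicit identification of the tangent plane of $\pi(\tau)$ as spanned by $\beta_\tau$ and a velocity vector, together with the handling of the degenerate cases, is just a more detailed write-up of the paper's one-line ``$\beta_\sigma$ lies in $\langle\sigma\rangle$'' argument.
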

\begin{proof}
For any triple of vectors $\beta, v_1,  v_2 \in \Gamma_2$ evaluating $\Omega_0$ on the expression $v_1\wedge v_2 \otimes \beta$ amounts to calculating the volume of the parallelepiped spanned by the triple. But note that on every $\sigma$  the corresponding framing vector $\beta_\sigma$ by construction of $\gamma_0$ lies in the linear span of $\<\sigma\>$. Thus $\Omega_0 |_{\beta_\sigma \sigma} \equiv 0$ for every $\sigma$ in the support of $\gamma_0$.
\end{proof}

We will need to calculate the periods of the form $\Omega_0$, that is the integrals over the elements in $H_2(J,\Gamma_2)=\bigwedge^2 \Gamma_1 \otimes \Gamma_2$. For this let us choose a basis $e_1, e_2, e_3$ of $\Gamma_2$ and a basis $\gamma_1, \gamma_2, \gamma_3$ of $\Gamma_1$ as shown on Fig. \ref{fig:K4_basis}. Here $e_1$ is the primitive tangent vector at any interior point of the edge A thought of as the evaluation functional on the space of 1-forms $\Omega(C)$. 
 \begin{figure}[htb]
    \centering
    \includegraphics[width=1.4in]{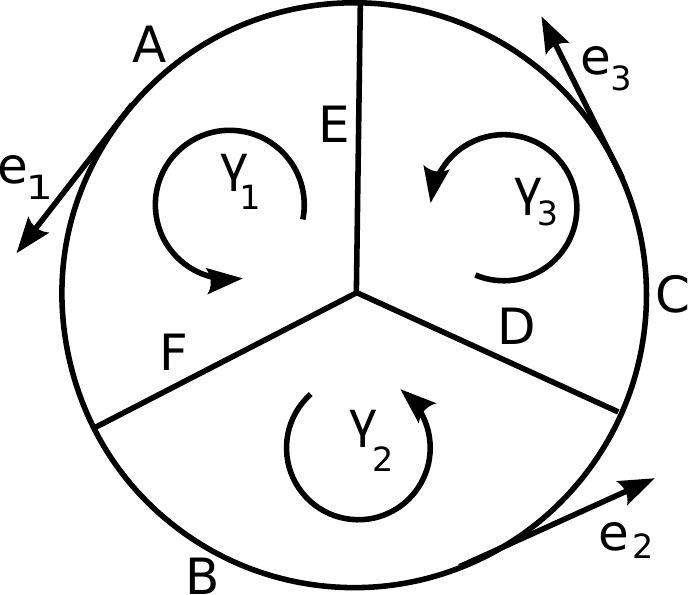}
    \caption{Bases for $\Gamma_2$ and $\Gamma_1$.} 
    \label{fig:K4_basis}
  \end{figure}
Similar $e_2$ and $e_3$ correspond to the edges B and C respectively.  Then we can write the three cycles $\gamma_1, \gamma_2, \gamma_3$ into the matrix 
$$Q=
\left(
\begin{smallmatrix}
a+e+f & -f & -e \\
-f & b+d+f & -d \\
-e & -d & c+d+e
\end{smallmatrix}
\right),
$$ 
whose columns are the coordinates of the $\gamma$'s in terms of the $\{e_i\}$.
Here $a, b, c, d, e, f  \in \R_{\ge 0}$ are the lengths of the corresponding edges of the curve.
 One can identify $Q$ as the matrix of the polarization form on the abelian variety $J$ in the basis $\{\gamma_1, \gamma_2, \gamma_3\}$.

Then the periods of $\Omega_0$ are generated over $\Z$ by the minors of $Q$:
$$\int_{\hat \gamma_i \otimes e_j} \Omega_0 = M_{ij}.
$$
Here we use the notation $\hat \gamma_i$ similar to $d \hat x_i$ (e.g., $\hat \gamma_1=\gamma_2 \wedge \gamma_3$). 
Explicitly, the 6 periods are:
\begin{gather*}
ab+ad+af+be+de+ef+bf+df,  \qquad ad+de+df+ef,\\
ac+ad+ae+ce+de+cf+df+ef, \qquad be+de+df+ef,\\
bc+bd+be+cd+de+cf+df+ef, \qquad cf+df+ef+de.
\end{gather*}
More symmetric set of the generators would be
\begin{gather*}
ad-be, \quad ad-cf,\\
de+df+ef+ad,\\
ab+af+bf+ad,\\
ac+ae+ce+ad,\\
bc+bd+cd+ad.
\end{gather*}
In other words the periods are generated by the differences of the products of opposite pairs of edges and sums of products of all three pairs adjacent to a common vertex plus an opposite pair. 

\subsection{Proof of Theorem \ref{thm:main}}
Now we are ready to finish the proof of the theorem.

\begin{proof}[Proof of Theorem \ref{thm:main}]
We will choose a convenient representative of the cycle $W_1-W_1^-$ and a 2-chain $\gamma_0 \in C_2(J,\Gamma_2)$ with $\partial \gamma_0 = [W_1] - [W_1^-]$. Another choice $\gamma_0'$ of the connecting chain will differ from $\gamma_0$ by a cycle in  $C_2(J, \Gamma_2)$. Thus to prove the theorem it is sufficient to show that $\int_{\gamma_0} \Omega_0\ne 0$ modulo periods of $\Omega_0$. 

First we move $W_1$ and $W_1^-$ such that the image of one of the edges, say C (of blue color) in $W_1$ coincides with that of $W_1^-$, thus canceling in the  $W_1-W_1^-$. For $\gamma_0$ we choose the 2-chain supported on 5 parallelograms: MNPQ, NLKP, LMQK, KQSP and PSML (see Fig. \ref{fig:2-chain}). 
 \begin{figure}[htb] 
     \centering
    \includegraphics[width=3.5in]{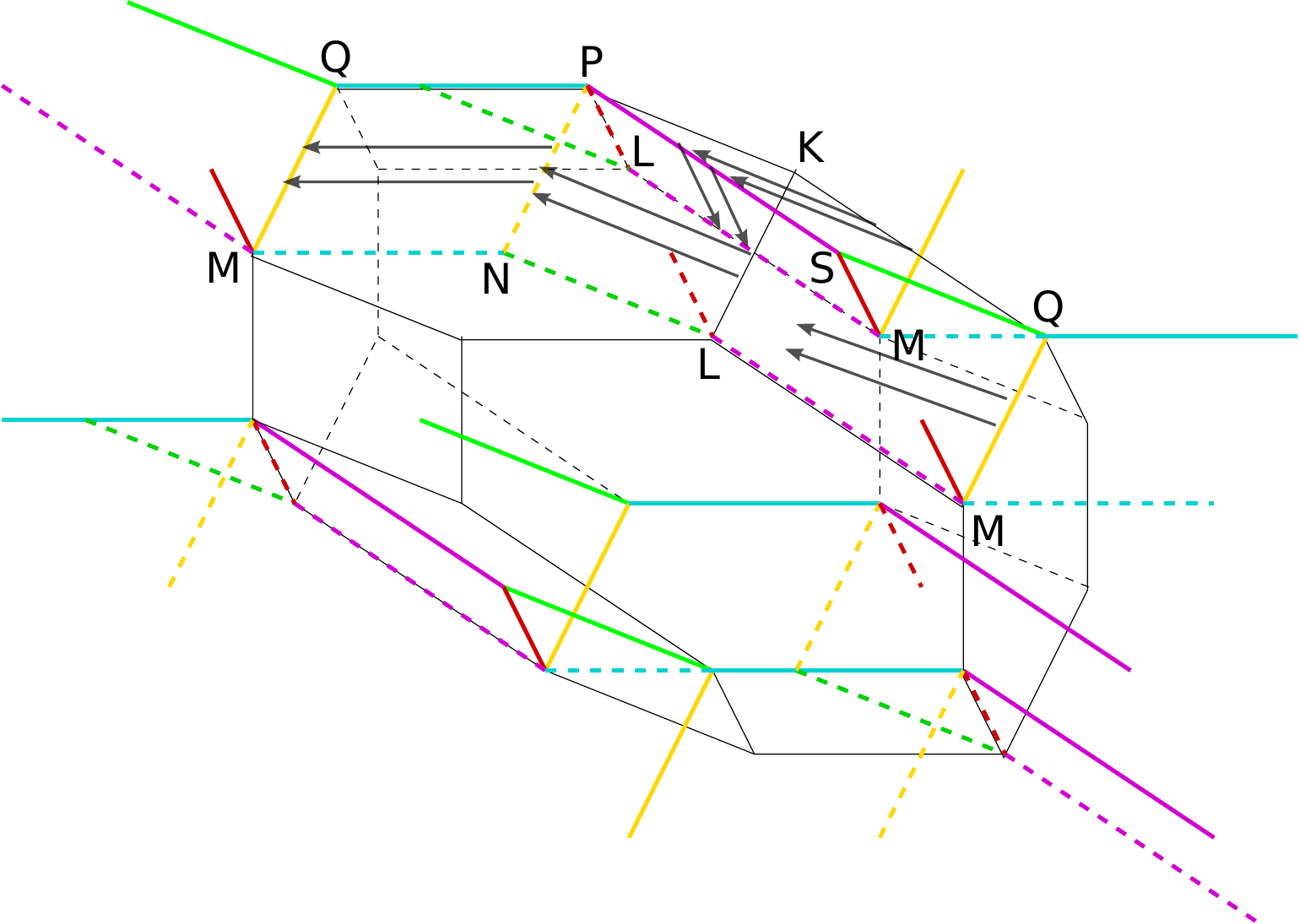}
       \caption{$W_1$ (solid colors) and  $W_1^-$ (dashed colors) and the connecting 2-chain.}
          \label{fig:2-chain}
  \end{figure}
The orientation of all cells are counterclockwise if viewed from above (there are no vertical cells in $\gamma_0$). 

To help a reader visualize the picture 
 \begin{figure}[htb] 
     \centering
    \includegraphics[width=2.5in]{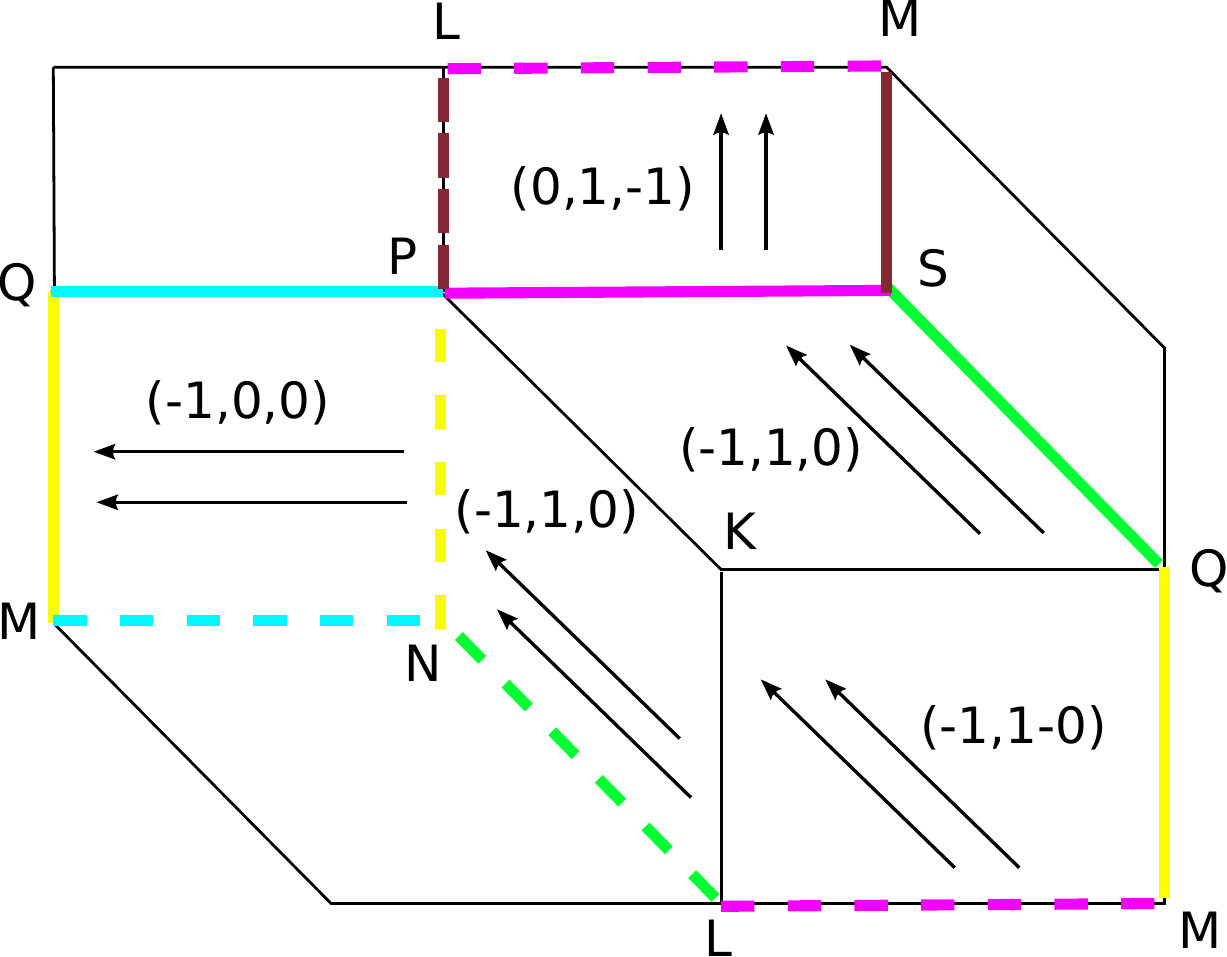}
      \caption{A view from above $\partial \gamma_0=[W_1] - [W_1^-]$.}
         \label{fig:2-chain_up}
  \end{figure} 
we give a view of the chain from above, where we indicated the coordinates of the framing vectors on all five parallelograms (see Fig. \ref{fig:2-chain_up}).
It is an easy check that $\partial \gamma_0=[W_1] - [W_1^-]$.
We also see that the framing on all parallelograms except LMQK is parallel to the supporting cell. Thus these four parallelograms contribute 0 into the integral. The integration along LMQK contributes minus its area $-ad$ into $\int_{\gamma_0}\Omega_0$.

But the real number $ad$ is not in the lattice of the periods for generic choice of the edge lengths $a, b, c, d, e, f  \in \R$. This completes the proof.
\end{proof}

\begin{remark}
Note that if one of the edges collapses, then $\int_{\gamma_0}\Omega_0 =- ad$ become zero modulo periods. This shows that the proof does not work for the hyperelliptic type curves. In fact one can easily construct an explicit deformation between $W_1$ and $W_1^-$ in $J$ in this case (see Fig. \ref{fig:hyperelliptic} for a curve of the second type from Fig. \ref{fig:five_types}).
 \begin{figure}[htb] 
     \centering
    \includegraphics[width=3.5in]{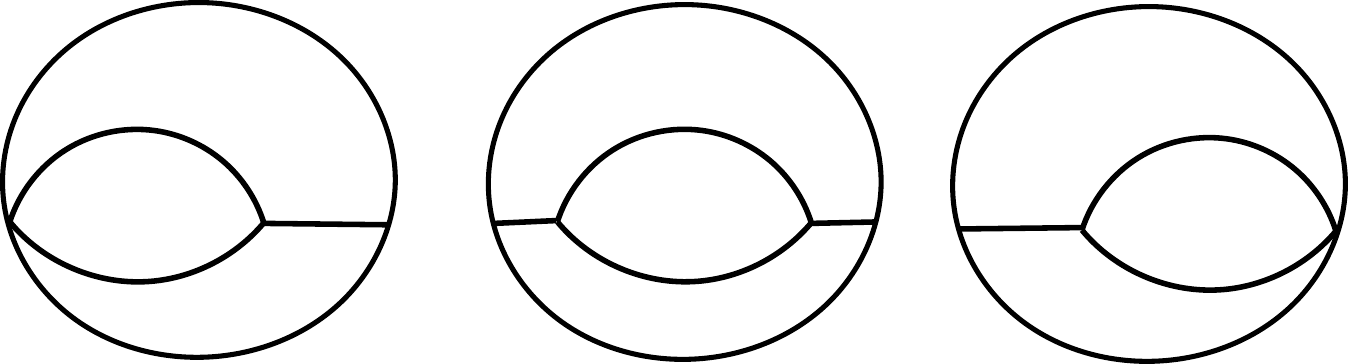}
      \caption{Deformation from $C$ to $C^-$ with hyperelliptic curve in the middle (all curves in the family have the same Jacobian).}
         \label{fig:hyperelliptic}
  \end{figure}  
\end{remark}

\section{Dicings, zonotopes and higher genus curves}\label{section:zonotopes} 
By {\em lattice} we mean a positive definite quadratic form $Q$ on a free abelian group $\Lambda$. In our case the group $\Lambda$ will be $\Gamma_2^* \cong \Z^g$. Since $Q$ provides an isomorphism $\Gamma_2^*\cong \Gamma_1$ this is equivalent to define $Q$ on $\Gamma_1$ as in the definition of the tropical abelian varieties.

We say a lattice $(\Lambda,Q)$ is a {\em dicing} if the associated Delaunay decomposition of the real vector space $\Lambda\otimes \R \cong \R^g$ is given by families of parallel hyperplanes intersecting at the lattice points. There are two other equivalent formulations of the dicing condition (cf., e.g. \cite{erdahl}). 
First is that the defining quadratic form can be written as $Q=\sum \alpha_i (e_i)^2$, where the collection of linear functionals $\{e_i\}$ form a totally unimodular system in the dual lattice $\Lambda^*$.  Second is that the maximal Voronoi cell is the zonotope, with zone vectors $\alpha_i e_i$.

Let us now return to the case of tropical Jacobian. To any edge one can associate an element $e_i$ of $\Gamma_2$ (as we did in the proof of Theorem \ref{thm:main}) as the evaluation of 1-forms on a primitive vector at any interior point of the edge. Then we can write the polarization form as $Q=\sum \alpha_i (e_i)^2$ (the choice of signs for the $e_i$'s won't matter).  Here $\alpha_i$ is the length of the corresponding edge. Thus we have the following:

\begin{proposition}\label{prop:zonotope}
The Jacobian lattice $(\Gamma_2^*, Q)$ is a dicing with zone vectors given by the edges of $C$.
\end{proposition}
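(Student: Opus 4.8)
The plan is to verify the first of the two equivalent formulations of the dicing condition recalled above. We already have the presentation $Q=\sum_i \alpha_i (e_i)^2$ with $\alpha_i$ the length of the $i$-th edge, so it remains only to check that the system of functionals $\{e_i\}$ is totally unimodular in the dual lattice $\Lambda^*=\Gamma_2$. The entire combinatorial content of the proposition is concentrated in this unimodularity statement; the passage from it to the dicing, and to the zonotopal description of the maximal Voronoi cell, is then supplied by the equivalence cited in \cite{erdahl}.

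First I would make the identification of the $e_i$ explicit. Recall that $\Gamma_2^*=\Omega_\Z(C)$ is the lattice of integral $1$-forms on $C$, i.e. of integral circuits (flows obeying Kirchhoff's law) on the underlying graph $\Gamma$; equivalently $\Omega_\Z(C)=H_1(\Gamma,\Z)$ sitting inside the edge lattice $C_1(\Gamma,\Z)\cong\Z^E$. By its very definition, $e_i$ sends a $1$-form $\omega$ to its coefficient on the $i$-th edge, so $e_i$ is nothing but the $i$-th edge-coordinate functional of $\Z^E$ restricted to the cycle lattice $H_1(\Gamma,\Z)$. (An edge that is a bridge lies on no circuit, so the corresponding $e_i$ vanishes; such an $e_i$ contributes neither to $Q$ nor to the system and may be discarded.) Thus, once a basis $\gamma_1,\dots,\gamma_g$ of $H_1(\Gamma,\Z)$ is fixed, the matrix $A$ of the system $\{e_i\}$ has entries $A_{ij}=e_i(\gamma_j)=$ (signed) multiplicity of edge $i$ in the cycle $\gamma_j$; this is precisely the edge--cycle incidence matrix of $\Gamma$.

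The heart of the argument is to show that $A$ can be taken totally unimodular. Here I would choose the basis coming from a spanning tree $T\subset\Gamma$: each non-tree edge $\epsilon$ determines a fundamental cycle $\gamma_\epsilon$, and these form a basis of $H_1(\Gamma,\Z)$. In this basis $A$ acquires the block shape $\left(\begin{smallmatrix} I \\ N \end{smallmatrix}\right)$, where the rows indexed by non-tree edges give (up to sign) the identity, while the rows indexed by tree edges give the fundamental-cycle (network) matrix $N$, whose entry $N_{t\epsilon}\in\{-1,0,1\}$ records whether the tree edge $t$ lies on the tree-path of $\gamma_\epsilon$ and with which orientation. One then invokes the classical fact that the network matrix of a graph is totally unimodular, equivalently that the cycle matroid of $\Gamma$ is regular; the underlying reason is that each column of $N$ is a difference $\chi_{R(u)}-\chi_{R(v)}$ of two root-to-vertex path indicators in $T$, and the standard induction on the size of a square submatrix applies.

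Granting the unimodularity of $\{e_i\}$, the first formulation of the dicing condition is met, and the cited equivalence \cite{erdahl} immediately gives that the Delaunay decomposition of $(\Gamma_2^*,Q)$ is cut out by parallel families of hyperplanes and that the maximal Voronoi cell is the zonotope with zone vectors $\alpha_i e_i$ --- that is, the edges of $C$. I expect the only genuine obstacle to be the total unimodularity step; the remaining work is purely a matter of unwinding the definitions of $\Omega_\Z(C)$ and of the functionals $e_i$.
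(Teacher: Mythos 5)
Your proposal is correct and takes essentially the same route as the paper: the paper likewise deduces the proposition by writing $Q=\sum_i \alpha_i (e_i)^2$, with $e_i$ the edge-evaluation functionals in $\Gamma_2$, and appealing to the first equivalent formulation of the dicing condition from \cite{erdahl}. The only difference is one of detail --- the paper treats the total unimodularity of the system $\{e_i\}$ as a known classical fact (regularity of the cycle lattice of a graph), whereas you spell out the spanning-tree/network-matrix verification, which is a sound and standard way to justify that step.
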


It is convenient to take the Voronoi zonotope as the fundamental domain of  $\Gamma_1$ action on $V=\Omega^*$. That is what we do on all pictures in the paper. Let us denote by $\mathcal Z(C)$ the zonotope arising  from a curve $C$. 

\begin{proposition}\label{prop:projection}
Let $C_e$ be a curve obtained from $C$ by removing an edge $e$.
Then $\mathcal Z(C_{e})$ is a projection of $\mathcal Z(C)$ along the zone vector corresponding to $e$  (see Fig. \ref{fig:zono_projection}).
 \begin{figure}[htb]
    \centering
    \includegraphics[width=3in]{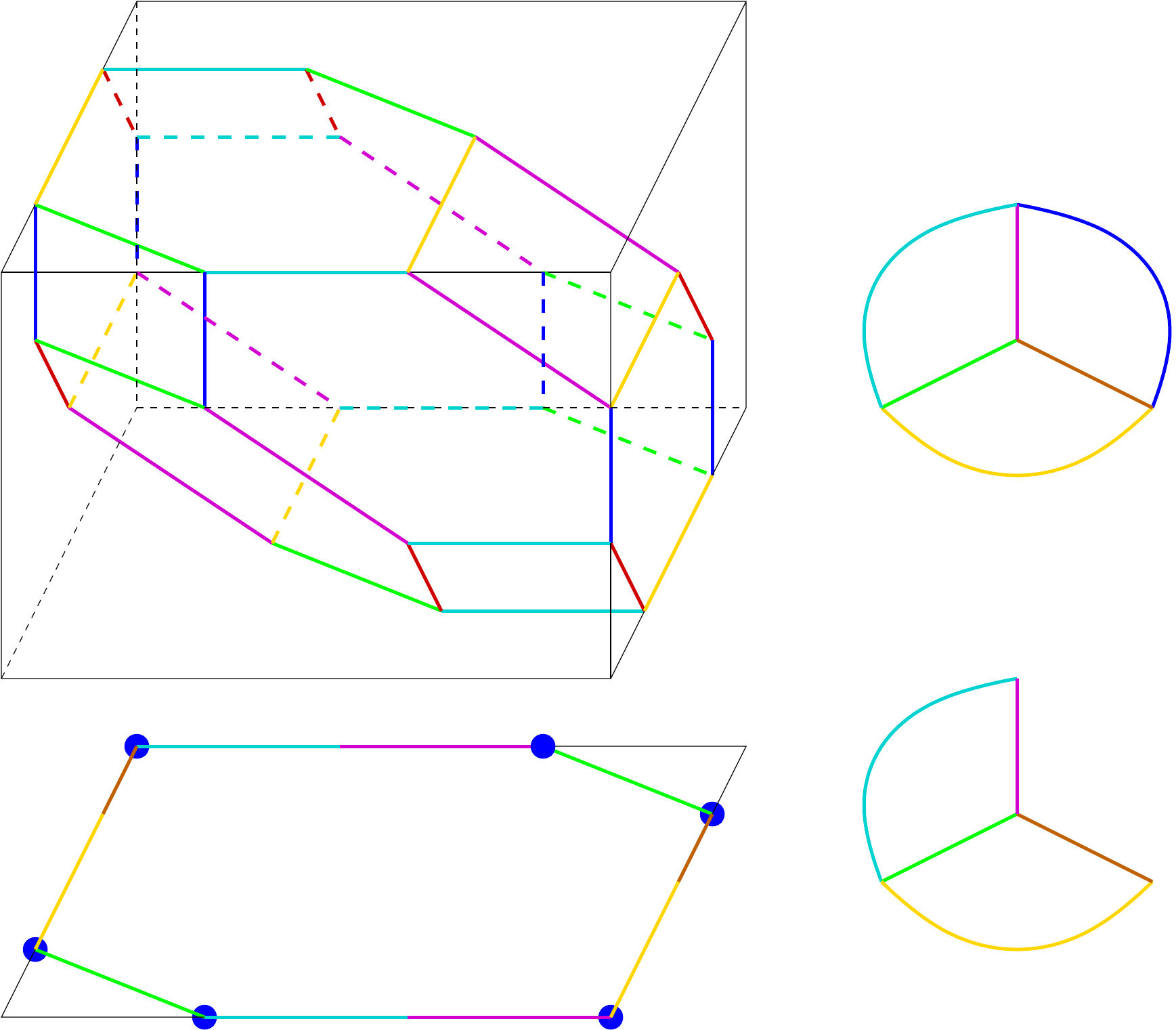}
    \caption{Removing an edge reduces genus and projects the zonotope.}
      \label{fig:zono_projection}
  \end{figure}
\end{proposition}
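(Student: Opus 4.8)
\emph{The plan} is to describe both Voronoi cells as Minkowski sums of their zone segments inside the ambient spaces $V=\Omega(C)^*$ and $V_{C_e}=\Omega(C_e)^*$, to identify $V_{C_e}$ with a quotient of $V$ by the line through the zone vector of the deleted edge, and to recognize the deletion of $e$ on the geometric side as the linear projection $\pi\colon V\to V_{C_e}$ along that line. The proposition then reduces to the elementary observation that a linear map sends a Minkowski sum of segments to the Minkowski sum of the images.

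First I would set up the linear-algebra dictionary. Each $1$-form on $C$ is an integral circuit, and the functional $e_i\in\Gamma_2$ attached to an edge $i$ reads off the flow of a form through that edge. A form has zero flow through $e$ precisely when it is supported on $C_e$, so
$$\Omega_\Z(C_e)=\ker\bigl(e\colon \Omega_\Z(C)\to\Z\bigr)$$
sits inside $\Omega_\Z(C)$ as a saturated sublattice of corank one. Dualizing this inclusion produces a surjection $\pi\colon V=\Omega(C)^*\twoheadrightarrow\Omega(C_e)^*=V_{C_e}$ whose kernel is the annihilator of $\ker(e)$, namely the line $\langle e\rangle\subset V$ spanned by the zone vector of the deleted edge; thus $\pi$ is exactly the projection along that zone vector. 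Furthermore, for every remaining edge $i\neq e$ the functional $e_i^{C_e}$ on $\Omega(C_e)$ is the restriction of $e_i$, so $\pi(e_i)=e_i^{C_e}$, while $\pi(e)=0$ and each length $\alpha_i$ is unchanged.

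Next I would assemble the zonotopes. Applying Proposition \ref{prop:zonotope} to $C$ and to $C_e$ and taking the Voronoi cells centered at the origin gives
$$\mathcal Z(C)=\sum_{\text{edges } i}\bigl[-\tfrac12\alpha_i e_i,\ \tfrac12\alpha_i e_i\bigr],\qquad \mathcal Z(C_e)=\sum_{i\neq e}\bigl[-\tfrac12\alpha_i e_i^{C_e},\ \tfrac12\alpha_i e_i^{C_e}\bigr],$$
Minkowski sums over the edges of the two curves. Since $\pi$ is linear it commutes with Minkowski sums and with scaling of segments, and the segment of the deleted edge collapses because its direction lies in $\ker\pi$; hence
$$\pi\bigl(\mathcal Z(C)\bigr)=\sum_{\text{edges } i}\bigl[-\tfrac12\alpha_i\,\pi(e_i),\ \tfrac12\alpha_i\,\pi(e_i)\bigr]=\sum_{i\neq e}\bigl[-\tfrac12\alpha_i e_i^{C_e},\ \tfrac12\alpha_i e_i^{C_e}\bigr]=\mathcal Z(C_e),$$
which is the asserted equality.

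The only real obstacle I anticipate is bookkeeping with the chain of dualities: one must verify that deleting an edge corresponds to passing to the kernel of its evaluation functional on forms, and hence, after dualizing, to projecting $V$ along \emph{that} edge's zone vector rather than along some other direction. Two minor points should be settled in passing. First, the functional $e$ is primitive in $\Gamma_2$ because the $\{e_i\}$ form a totally unimodular system, so that $\Gamma_2(C_e)=\Gamma_2(C)/\langle e\rangle$ is torsion-free and is genuinely the Jacobian lattice of the smaller curve, making the picture integral and not merely real. Second, in the degenerate case where $e$ is a bridge it evaluates to $0$ on every circuit, its zone vector vanishes, and the asserted projection is the identity --- consistent with $\Omega(C_e)=\Omega(C)$ and with the genus being unchanged.
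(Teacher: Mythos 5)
Your proof is correct and takes essentially the same route as the paper's: the paper's two-sentence argument is exactly your key step, namely that removing $e$ corresponds to passing to the forms with zero flow through $e$ (i.e.\ $\Omega_\Z(C_e)=\ker\bigl(e\colon\Omega_\Z(C)\to\Z\bigr)$), so that dually $V_{C_e}$ is the quotient of $V$ by the line $\langle e\rangle$, i.e.\ the projection along the zone vector of $e$. Your Minkowski-sum bookkeeping and the side remarks on primitivity and the bridge case merely make explicit what the paper leaves implicit.
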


\begin{proof}
Removing an edge effectively corresponds to considering only those 1-forms (currents) on $C$ which do not pass through the missing edge. Thus on the dual side it results in considering linear functionals modulo the one which evaluates on this edge.  
\end{proof}

\begin{remark} 
It may be worth to note the effect of contracting some edges in $C$. Namely, let $C_{C_k}$ be the curve obtained from $C$ by contracting a genus $k$ subcurve $C_k\subset C$. This can be thought of as setting the lengths of all edges in $C_k$ to zero. Then one can argue that
$\mathcal Z(C_{C_k})$ is a face of of codimension $k$ in $\mathcal Z(C)$ (see Fig. \ref{fig:faces}). 
 \begin{figure}
    \centering
    \includegraphics[width=3in]{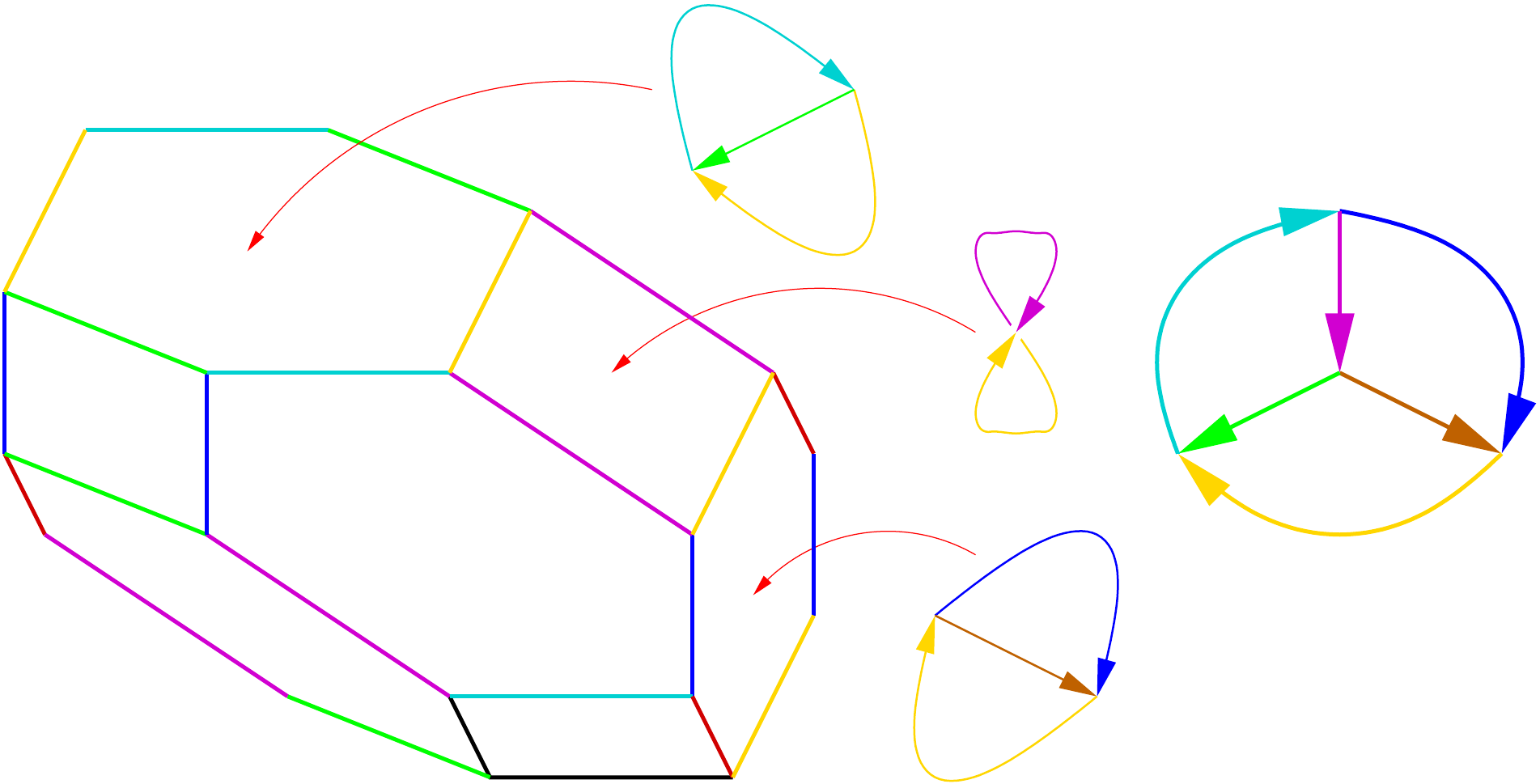}
      \caption{Contraction of subcurves in $C$ and faces of $\mathcal Z(C)$.}
          \label{fig:faces}
  \end{figure}
Moreover there is a bijection between faces of $\mathcal Z(C)$ and subcurves of $C$.
\end{remark}

Proposition \ref{prop:projection} combined with Theorem \ref{thm:main} leads to the main result of the paper:
\begin{theorem}
Let $C$ be a tropical curve of genus $g$ whose underlying graph contains $K_4$ as a subgraph. Then $W_k$ is not algebraically equivalent to $W_k^-$ in $J(C)$ for $k=1,\dots, g-2$.
\end{theorem}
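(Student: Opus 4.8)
The plan is to reduce the statement for an arbitrary $C$ containing $K_4$ to the genus $3$ computation of Theorem \ref{thm:main}, using Proposition \ref{prop:projection} to organize the reduction. I fix a $K_4$-subgraph of $C$ and remove, one at a time, the edges of $C$ lying outside it. By Proposition \ref{prop:projection} each removal projects the Voronoi zonotope along the corresponding zone vector, so the composite is a surjective homomorphism of tropical tori $p\colon J(C)\to J'$, where $J'=J(K_4)$ has genus $3$. I choose coordinates on $V\cong\R^g$ splitting the three $K_4$-directions $x_1,x_2,x_3$ (on which $p$ is the coordinate projection) from the $g-3$ removed-edge directions. Since $\mu^k=\mu^1+\dots+\mu^1$ is the $k$-fold sum, $W_k$ is the $k$-fold Minkowski sum $W_1+\dots+W_1$ in the group $J(C)$; and because the removed edges collapse under $p$, one gets $p(W_1)=W_1^{(K_4)}$ and hence $p(W_k)=W_k^{(K_4)}$.

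For $k=1$ this already finishes. As $p$ is a tropical morphism, $p_*$ carries algebraic equivalences to algebraic equivalences. If $W_1\sim W_1^-$ in $J(C)$, then pushing forward yields $W_1^{(K_4)}\sim (W_1^{(K_4)})^-$ in $J'$, contradicting Theorem \ref{thm:main} (the genericity hypothesis enters exactly as there, through the edge lengths of the $K_4$-subgraph). This disposes of $k=1$ for every curve containing $K_4$, of any genus.

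For $k\ge 2$ the naive pushforward loses all information, and this is where the real work lies. Indeed $p_*W_k$ lands in the $3$-dimensional $J'$, where $W_2^{(K_4)}$ is the symmetric theta divisor and $W_k^{(K_4)}$ for $k\ge 3$ is all of $J'$; in either case $p_*(W_k-W_k^-)$ carries no content. I instead introduce the higher determinantal $(k+1)$-form $\Omega_k$: the $\Gamma_2^*$-valued $(k+1)$-form whose pairing $\langle\Omega_k(v_1,\dots,v_{k+1}),\beta\rangle$ equals the sum over coordinate $(k+2)$-subsets $I$ of the $(k+2)\times(k+2)$ minors $\det(\beta,v_1,\dots,v_{k+1})_I$. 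For $k=1$ and $g=3$ this is precisely $\Omega_0$. By construction $\Omega_k$ vanishes on any framed cell whose framing $\beta$ lies in the tangent span of the cell, since then $\beta$ is a column-dependent extra column in every minor --- exactly the property of $\Omega_0$ exploited in Lemma \ref{lemma:boundary}. Hence $\int_\gamma\Omega_k=0$ for every chain $\gamma$ connecting two algebraically equivalent $k$-cycles. To detect $W_k\not\sim W_k^-$ I build an explicit connecting chain from the genus $3$ chain $\gamma_0$ of Theorem \ref{thm:main}: writing $W_k^-=(W_1^-)+\dots+(W_1^-)$ and telescoping, the Minkowski convolutions $\gamma_0 * ( (W_1^-)^{+(j-1)}+W_1^{+(k-j)} )$ for $j=1,\dots,k$ assemble into a $(k+1)$-chain $\gamma$ with $\partial\gamma=[W_k]-[W_k^-]$. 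Evaluating $\Omega_k$ on $\gamma$, every contribution whose $\gamma_0$-factor is one of the four tangentially-framed parallelograms vanishes as in Theorem \ref{thm:main}, and the single surviving cell $LMQK$ contributes $-ad$ times the volume of the swept Minkowski factor, a nonzero multiple of $ad$. The meaningful range is exactly $1\le k\le g-2$, since $W_{g-1}$ is the symmetric theta divisor and $W_g=J(C)$.

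The main obstacle is this general-$k$ step. Two points need care: verifying on the chain level that the telescoped Minkowski convolution really has boundary $[W_k]-[W_k^-]$ (the case $k=1$ being the computation behind Fig.~\ref{fig:2-chain_up}), and --- more delicately --- checking that the resulting number, a monomial multiple of $ad$, is not in the period lattice of $\Omega_k$. That lattice is spanned by the $(k+2)\times(k+2)$ minors of the genus $g$ polarization matrix, and I would argue, exactly as in the genus $3$ case where $ad$ is not an integral combination of $ad-be$, $ad-cf$ and the remaining generators, that for generic edge lengths this monomial avoids the lattice. Granting this genericity statement, $\int_\gamma\Omega_k\ne 0$ modulo periods, so $W_k\not\sim W_k^-$ for all $1\le k\le g-2$.
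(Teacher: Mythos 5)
Your $k=1$ argument coincides with the paper's: push the putative equivalence forward along the edge-removal projections of Proposition \ref{prop:projection} until you reach a genus $3$ curve of type $K_4$, then invoke Theorem \ref{thm:main}. For $k\ge 2$, however, the paper builds no new machinery: it follows Ceresa's induction, degenerating $C$ to $\tilde C = C'\cup E$, a generic genus $g-1$ curve glued to a loop $E$ at one vertex, so that $J(\tilde C)=J(C')\times E$ and $W_k$ decomposes as $W_{k-1}(C')\times E$; an equivalence $W_k\sim W_k^-$ would then force $W_{k-1}(C')\sim W_{k-1}(C')^-$, and induction brings everything down to the $k=1$ case. Your diagnosis that $p_*$ loses all information for $k\ge 2$ is correct, but the cure is this product degeneration, not a higher determinantal form.

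Your replacement for $k\ge 2$ has a genuine gap --- in fact two. First, the determinantal method proves inequivalence only through the chain of implications: a hypothetical algebraic equivalence yields, by the $k$-cycle analogue of Lemma \ref{lemma:boundary}, a connecting chain whose framings are tangent to its cells; the form vanishes on such chains; and any two chains with the same boundary \emph{in the same coefficient system} differ by a cycle, so the integral modulo periods is an invariant. For $k$-cycles with $k\ge 2$ the tautological framing of a cell of a slice $Z_s$ is a $k$-vector, an element of $\bigwedge^k\Gamma_2$, not a single vector; there is no canonical $\Gamma_2$-valued framing of $W_k$. Your $\Omega_k$ pairs against a single vector $\beta$, so it integrates only over $\Gamma_2$-framed chains, while the chain produced by a hypothetical equivalence lives in $C_{k+1}(J,\bigwedge^k\Gamma_2)$. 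Consequently your computation $\int_\gamma\Omega_k\ne 0$ on one particular chain proves nothing: there is no chain to compare it with. If you repair this by taking coefficients in $\bigwedge^k\Gamma_2$, the form must eat $k$ framing vectors and $k+1$ tangent vectors, i.e.\ it is built from $(2k+1)\times(2k+1)$ minors, and it vanishes identically as soon as $2k+1>g$ --- already for $g=4$, $k=2$, inside your claimed range. Second, your telescoping boundary claim fails at the chain level: with $Y_j=(W_1^-)^{+(j-1)}+W_1^{+(k-j)}$, the chains $[W_1^-]*Y_j$ and $[W_1]*Y_{j+1}$ have the same support but are framed by tangent directions of \emph{different} Minkowski factors, so they do not cancel and $\partial\gamma\ne[W_k]-[W_k^-]$. (The unproved genericity statement about the period lattice of $\Omega_k$ is a further loose end, but it is moot given the above.)
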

\begin{proof}
Suppose $W_1\sim W_1^-$. This equivalence survives in the projection $J(C) \to J(C_e)$ (which is a tropical map), where $C_e$ is obtained from $C$ by removing an edge.  Finally we arrive at a genus 3 curve of type $K_4$ which provides the contradiction.

As for $W_k$ with $k>1$ we can formally follow Ceresa's inductive argument in \cite{ceresa}. Namely we can degenerate $C$ to a curve $\tilde C$ which is a union of a (generic) curve $C'$ of genus $g-1$ and  a loop $E$ (elliptic curve) with a common vertex. Then $W_k$ decomposes as a product $W_{k-1}(C')\times E$ in $J(C')\times E = J(\tilde C)$ and we use the induction hypothesis.
\end{proof}

\end{document}